\newtheorem*{theorem}{Theorem}
\newtheorem*{corollary}{Corollary}
\title{On finite groups with exactly one noncommutator}
\author{Saveliy V. Skresanov}
\date{}
\begin{document}
\maketitle

\begin{abstract}
	An element $x$ of a group $G$ is a \emph{commutator} if it can be expressed in the form $x = a^{-1}b^{-1}ab$
	for some $a, b \in G$. In 2010 MacHale posed the following problem in the Kourovka notebook: does there exist
	a finite group $G$, with $|G| > 2$, such that there is exactly one element of $G$ which is not a commutator?
	We answer this question in the affirmative and provide an infinite series of such groups, the smallest group
	in our construction having size $16609443840$.
\end{abstract}


\section{Introduction}

The study of commutators in finite groups has a long history, see, for instance, a survey~\cite{kappe}.
It is well known that the derived subgroup of a finite group need not consist solely of commutators;
the smallest such example having size~\( 96 \), see~\cite{gural}. One can construct a perfect group with a noncommutator,
see~\cite{isaacs} and an improvement in~\cite{gural2}. In contrast, by the positive solution to the Ore conjecture~\cite{ore},
every element of a nonabelian finite simple group is a commutator.

In the 20th edition of the Kourovka notebook~\cite{kourovka} MacHale asked the following question.
\medskip

\noindent
\textbf{Question 17.76.} \emph{Does there exist a finite group \( G \), with \( |G| > 2 \),
such that there is exactly one element in \( G \) which is not a commutator?}
\medskip

Let \( G \) be a finite group with exactly one noncommutator. Since every element outside the derived subgroup \( [G, G] \)
is not a commutator, we must have \( |G \setminus [G,G]| \leq 1 \). Hence either \( G \) is a cyclic group of order~\( 2 \),
which is excluded from MacHale's question, or \( G \) is perfect. If \( t \) is the noncommutator element of \( G \),
then \( t^{-1} \) and \( g^{-1}tg \), \( g \in G \), are also noncommutators. It follows that \( t^2 = 1 \) and \( t \)
lies in the center \( Z(G) \) of~\( G \).

Using GAP~\cite{gap}, Staroletov~\cite{staroletov} showed that there are no groups with one noncommutator among perfect groups of order at most \( 10^6 \),
with the possible exception of some group orders which were not present in the GAP perfect groups library at the time.
He also showed that there are no such examples among finite quasisimple groups,
i.e.\ among groups \( G \) such that \( G/Z(G) \) is a nonabelian simple group (see also~\cite[Table~1]{liebeck}).

We show that there is, in fact, a perfect group with exactly one noncommutator, so MacHale's question can be answered in the affirmative.
Our construction is inspired by a theorem of Isaacs~\cite{isaacs}.
He showed that if \( A \) is an abelian group and \( M \) is a nonabelian group, then the derived subgroup of the wreath product \( A \wr M \),
where \( M \) is assumed to act regularly, contains a noncommutator under some conditions (which were substantially relaxed by Guralnick in~\cite{gural2}).
In our construction, let \( M \) be the Mathieu group \( M_{11} \) acting transitively on \( 22 \) points; the point stabilizer
will be isomorphic to the alternating group of degree~\( 6 \). We consider the wreath product \( H = C_2 \wr M \) of the cyclic group of order~\( 2 \)
and \( M \). Note that \( M \) no longer acts regularly and \( |H| = 2^{22} \cdot |M| = 2^{22} \cdot 7920 \). Finally, we set \( G_{MacHale} = [H, H] \).

\begin{theorem}
	The group \( G_{MacHale} \) has order \( 16609443840 \) and contains exactly one noncommutator.
\end{theorem}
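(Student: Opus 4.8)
The plan is to identify $G_{MacHale}$ explicitly and reduce the two assertions to questions about the $kM$-module $V$, where $k$ is the field of two elements and $M=M_{11}$ acts on the $22$ points $\Omega$ (the cosets of $A_6$). Let $P$ denote the base group $C_2^{22}$, viewed as the permutation $kM$-module on $\Omega$, and set $V=[P,M]$. Since $M$ is transitive, $[P,M]$ is the augmentation submodule $\{v\in P:\sum_i v_i=0\}$, which has dimension $21$; hence $|G_{MacHale}|=2^{21}\cdot 7920=16609443840$, settling the order. The all-ones vector $j=(1,\dots,1)$ lies in $V$ (as $22$ is even), is fixed by $M$, and spans $Z(G_{MacHale})$, the unique central involution. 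From $H^1(M,k)=\operatorname{Hom}(M,k)=0$ together with $P^M=\langle j\rangle$ one gets $(P/\langle j\rangle)^M=0$; dually $V$ has no trivial quotient, so $[V,M]=V$ and $G_{MacHale}$ is perfect. The introduction shows that a group with a single noncommutator has it a central involution, which here is necessarily $j$, so $j$ is the element to scrutinise, and it suffices to prove (a) $j$ is not a commutator and (b) every other element is a commutator.

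The main tool is the image of the commutator map. Writing elements of $G_{MacHale}$ as $vm$ with $v\in V$ and $m\in M$, a direct computation in characteristic $2$ shows that for fixed $m_1,m_2$ the commutator $[v_1m_1,v_2m_2]$ has $M$-part $[m_1,m_2]$, while its $V$-part runs exactly through the subspace $m_1^{-1}m_2^{-1}\big([V,m_1]+[V,m_2]\big)$ as $v_1,v_2$ vary, where $[V,m]:=(m-1)V$. Thus $wc$ (with $w\in V$, $c\in M$) is a commutator if and only if there are $m_1,m_2\in M$ with $[m_1,m_2]=c$ and $m_2m_1\,w\in[V,m_1]+[V,m_2]$. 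Taking $c=1$, $w=j$ and using $m_2m_1\,j=j$, assertion (a) becomes the statement that $j\notin[V,m_1]+[V,m_2]$ for every commuting pair $m_1,m_2$.

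To handle (a) I would pass to duals. The permutation module is self-dual, and $V^{*}\cong P/\langle j\rangle$ under the pairing $\langle v,\bar p\rangle=\sum_i v_ip_i$; a functional kills $[V,m]$ exactly when it is $m$-fixed, so $j\in[V,m_1]+[V,m_2]$ fails precisely when $(P/\langle j\rangle)^A$ contains a vector of odd weight, where $A=\langle m_1,m_2\rangle$. Equivalently, one seeks a subset $S\subseteq\Omega$ of odd size with $m(S)\in\{S,\,\Omega\setminus S\}$ for all $m\in A$. Such subsets form a subspace under symmetric difference, and the requirement only relaxes under passage to subgroups, so it is enough to produce one such $S$ for each conjugacy class of maximal abelian subgroup of $M$, namely $C_{11},C_8,C_6,C_5,C_3^2$ and $C_2^2$. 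Here the useful structure is that $\Omega\to\Delta$ is the $M$-equivariant double cover of the $11$-point action ($A_6<M_{10}$ of index $2$): since $|\Delta|=11$ is odd, every abelian $A$ has an odd-length orbit on $\Delta$, and whenever the preimage of such an orbit splits into two $A$-orbits, one of them serves as $S$.

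I expect the crux to be the abelian subgroups all of whose orbits on $\Omega$ are even, the model case being $C_8$: an element of order $8$ cannot lie in $A_6$ (whose Sylow $2$-subgroup is dihedral of order $8$), so on the fibre over its fixed point in $\Delta$ it lies in $M_{10}\setminus A_6$ and swaps that fibre, leaving no odd invariant orbit. For such $A$ one must instead exhibit an $A$-invariant set of size $11$ that the nontrivial coset of $A$ sends to its complement, and verify that no odd invariant union is available; constructing these sets is the delicate part of (a), which I would settle by the explicit block combinatorics (conveniently checked in GAP). For (b) the argument splits: when $c\neq 1$ I would choose $m_1,m_2$ with $[m_1,m_2]=c$ and $\langle m_1,m_2\rangle=M$, whence $[V,m_1]+[V,m_2]=V$ by $(P/\langle j\rangle)^M=0$ and every $wc$ is a commutator; when $c=1$ I would take $m_2=1$, reducing the claim to the covering statement that every $w\neq j$ lies in $[V,m]$ for some $m\in M$. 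Both reduce to finite checks over the conjugacy classes of $M$, and together with (a) they show that the set of noncommutators of $G_{MacHale}$ is exactly $\{j\}$.
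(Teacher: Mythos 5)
Your structural reductions are correct as far as they go, and they take a genuinely different route from the paper, which proves the theorem by direct GAP computation (representing \(G_{MacHale}\) as a degree-\(44\) permutation group, exhibiting \(1279\) explicit commutator pairs that cover every conjugacy class except that of the central involution \(t\), and showing \(t\) is not a commutator via the observation that a central element is a commutator if and only if left multiplication by it stabilizes some conjugacy class). Your identification \(G_{MacHale}=V\rtimes M\) with \(V\) the \(21\)-dimensional even-weight submodule, the order count, the commutator formula with \(V\)-part \(m_1^{-1}m_2^{-1}\bigl([V,m_1]+[V,m_2]\bigr)\), the dual reformulation of ``\(j\) is a commutator'' via odd subsets \(S\) with \(m(S)\in\{S,\Omega\setminus S\}\), and the reduction of part (a) to the six classes of maximal abelian subgroups are all sound. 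In fact (a) can be finished by hand, so deferring it to GAP is an omission but not a fatal one: \(C_{11},C_5,C_6,C_3^2,C_2^2\) all have odd orbits on \(\Omega\) (split fibres or fixed points), and an element \(x\) of order \(8\) acts on \(\Omega\) with cycle type \((2)(4)(8)(8)\), so taking one point of the \(2\)-cycle together with one \(\langle x^2\rangle\)-half of each of the other three cycles gives an \(11\)-element set that \(x\) maps to its complement.

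The genuine gap is in part (b), which is where the real content of the theorem lies, and there both halves of your plan rest on unproven claims. For \(c\neq 1\) you need that every nontrivial \(c\in M_{11}\) equals \([m_1,m_2]\) for a pair with \(\langle m_1,m_2\rangle=M_{11}\); this is stronger than Ore's property and stronger than spread, and nothing in the proposal establishes it (it is true, but needs a character-theoretic count of commutator pairs against pairs lying in the maximal subgroups \(M_{10}\), \(L_2(11)\), \(3^2{:}Q_8.2\), \(S_5\), \(GL(2,3)\), or a machine check). For \(c=1\) you need the covering claim \(V\setminus\{0,j\}\subseteq\bigcup_{m\in M}[V,m]\), and this is \emph{not} ``a finite check over the conjugacy classes of \(M\)'': the union is \(M\)-invariant, so the check runs over the \(M\)-orbits on the \(2^{21}\) vectors of \(V\), of which there are hundreds, not over the ten classes of \(M\). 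Worse, its truth is not forced by the theorem itself: the theorem only guarantees that each \(w\neq j\) satisfies \(m_2m_1w\in[V,m_1]+[V,m_2]\) for \emph{some} commuting pair, and by restricting to \(m_2=1\) you may in principle lose elements; if the restricted covering fails, the analysis must be redone over all commuting pairs, a much larger verification. So what you have is an attractive and correct reduction framework, but the decisive assertion --- that everything other than \(j\) is a commutator --- is not proved; and since closing either half realistically ends in a machine computation anyway, the proposal lands in the same place as the paper, except that the paper's computation is fully specified and completed.
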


The proof of this theorem relies on a computation in GAP, but because of the size of the group a direct ``bruteforce'' computation is not feasible.
In the next section we describe how one can check that \( G_{MacHale} \) has one noncommutator in reasonable time.

The main result implies that there is an infinite series of groups with one noncommutator.

\begin{corollary}
	There are infinitely many finite groups with exactly one noncommutator.
\end{corollary}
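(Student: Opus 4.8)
The plan is to bootstrap the single example $G_{MacHale}$ into an infinite family by central products that amalgamate the distinguished central involution. Write $G = G_{MacHale}$ and let $t$ be its unique noncommutator; by the Theorem, together with the remarks in the introduction, $t$ is a central involution and an element $h\in G$ is a commutator if and only if $h \neq t$. For each $k \ge 1$ I would form the group
\[
  G_k = G^k / N_k, \qquad N_k = \{\,(t^{e_1},\dots,t^{e_k}) : e_1+\dots+e_k \equiv 0 \!\!\pmod 2\,\},
\]
the central product of $k$ copies of $G$ in which all the involutions $t$ are identified. Since $t$ is central, $N_k$ is a central, hence normal, subgroup of $G^k$, and $G_k$ is perfect because it is generated by the perfect images of the factors. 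Writing $\bar t$ for the common image of the involutions, one notes $\bar t \neq 1$ because $(t,1,\dots,1) \notin N_k$.

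The core of the argument is a parity computation describing the commutators of $G_k$. Because commutators in a direct product are computed coordinatewise, the coset $(g_1,\dots,g_k)N_k$ is a commutator in $G_k$ exactly when there is a tuple $(e_1,\dots,e_k)$ with $\sum_i e_i$ even such that $g_i t^{e_i}$ is a commutator in $G$, equivalently $g_i t^{e_i} \neq t$, for every $i$. For each coordinate at least one admissible $e_i$ exists: take $e_i=0$ if $g_i\ne t$ and $e_i=1$ if $g_i=t$; moreover a coordinate with $g_i \notin\{1,t\}$ admits both values of $e_i$. Hence if some $g_j$ lies outside $\{1,t\}$ I can fix admissible values on the remaining coordinates and then use the free coordinate $j$ to correct the parity, so $(g_1,\dots,g_k)N_k$ is a commutator.

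It remains to treat cosets with every $g_i \in \{1,t\}$; these form the image $\langle t\rangle^k/N_k \cong C_2$, whose nontrivial element is $\bar t$. For such a coset every $e_i$ is forced ($e_i = 1$ precisely when $g_i = t$), so $\sum_i e_i$ equals the number of coordinates equal to $t$, which is even exactly when the coset is trivial. Thus $\bar t$ is the unique noncommutator of $G_k$. The main point to get right is precisely this bookkeeping: the amalgamation $N_k$ collapses the $2^{k-1}$ potential ``central'' noncommutators of the direct power $G^k$ down to the single class $\bar t$, while every coset meeting the complement of $\langle t\rangle^k$ remains a commutator. Finally $|G_k| = |G|^k/2^{k-1}$ is strictly increasing in $k$ since $|G|>2$, so the groups $G_k$ have pairwise distinct orders and furnish infinitely many finite groups with exactly one noncommutator.
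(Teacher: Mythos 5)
Your proof is correct and rests on the same mechanism as the paper's: the paper forms $K = (G \times G)/T$ with $T = \langle (t,t) \rangle$ --- exactly your $G_2$ --- shows by the same coordinatewise-commutator analysis (a coordinate fails to be a commutator only when it equals $t$) that $(t,1)T$ is the unique noncommutator, and then iterates the construction, which yields precisely your groups $G_{2^n}$. Your parity bookkeeping packages the $k$-fold amalgamated central product in one step instead of by iteration; this is a mild generalization in presentation, not a different idea.
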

\begin{proof}
	Let \( G \) be a finite nonabelian group with exactly one noncommutator~\( t \).
	Let \( T \) be a normal subgroup of \( G \times G \) generated by \( (t, t) \), and set \( K = (G \times G)/T \).
	Notice that \( |K| > |G| \). We will prove that \( K \) also has exactly one noncommutator.

	Suppose that \( (t, 1)T \in K \) is a commutator. Then there exist elements \( a, b \in G \times G \)
	such that \( (t, 1) \in [a, b]T \). If \( a = (a_1, a_2) \) and \( b = (b_1, b_2) \), then
	\( (t, 1) = ([a_1, b_1], [a_2, b_2])T \). Hence either \( t = [a_1, b_1] \), which is impossible since \( t \)
	is not a commutator in \( G \), or \( 1 = [a_2, b_2]t \). But that implies \( t = [a_2, b_2] \) which is again a contradiction.
	Therefore \( (t, 1) \) is not a commutator in \( K \).

	Let \( (x, y)T \in K \) be an element distinct from \( (t, 1)T \).
	We may assume that \( x, y \neq t \). Indeed, if \( x = t \), then \( y \neq 1 \), since \( (x, y) \not\in (t, 1)T \).
	So we may replace the representative \( (x, y) \) by \( (xt, yt) = (1, yt) \), where \( 1, yt \neq t \).
	Similar reasoning applies in the case when \( y = t \). Now, if \( x, y \neq t \), there exist elements \( a_1, a_2, b_1, b_2 \in G \)
	such that \( x = [a_1, b_1] \), \( y = [a_2, b_2] \), and hence \( (x, y)T = [a, b]T \) is a commutator in \( K \),
	where \( a = (a_1, a_2) \), \( b = (b_1, b_2) \).

	Given a nonabelian group \( G \) with one noncommutator we constructed a larger group \( K \) with one noncommutator.
	By applying this procedure repeatedly to the original group \( G_{MacHale} \), we obtain an infinite series of groups with one noncommutator.
\end{proof}

It follows immediately from the Corollary and the compactness principle that there is an infinite group of bounded exponent with exactly one noncommutator.

We would like to end this section with an open question.
It is not clear whether \( G_{MacHale} \) is the smallest nonabelian group with one noncommutator,
so it would be interesting to determine the smallest group with such property.
\medskip

\noindent
\textbf{Question.}
\emph{What is the smallest nonabelian finite group with exactly one noncommutator?}

\section{How to check properties of \( G_{MacHale} \)}

We gave an explicit construction of \( G = G_{MacHale} \) in the previous section, but for GAP computations we can
represent it as a permutation group of degree \( 44 \) generated by \( 4 \) elements:
\begin{align*}
	\tt G := Group([&(1,2)(43,44),\\
	&(1,2)(21,22),\\
	&(1,39,13,43,25)(2,40,14,44,26)(3,37,15,41,27)(4,38,16,42,28)\\
	&(5,35,23,17,31)(6,36,24,18,32)(7,33,21,19,29)(8,34,22,20,30),\\
	&(1,23,27,11,41)(2,24,28,12,42)(3,22, 26,10,44,4,21,25,9,43)\\
	&(5,16,20,40,32,6,15,19,39,31)(7,13,17,37,29)(8,14,18,38,30) ]);
\end{align*}

It can be easily checked in GAP that this group has order \( 16609443840 \) and its center has order \( 2 \).
Let \( t \) be the nonidentity element of the center.

We need to verify that all elements of \( G \) besides \( t \) are commutators.
Our group has \( 1280 \) conjugacy classes and it suffices to check whether a representative of each conjugacy class (besides \( t \))
is a commutator. In the ancillary file published alongside this preprint we provide GAP code which
defines a list \texttt{commPairs} containing \( 1279 \) pairs of permutations. Each pair \( a, b \) defines a commutator \( [a, b] \)
and it can be checked that these commutators give representatives of \( 1279 \) conjugacy classes of \( G \).

The supplementary GAP file also contains a function \texttt{CheckCommutators} which can be used to check this property; it takes the author's computer
about \( 15 \) minutes to perform this check.
The list \texttt{commPairs} was found by generating random commutators until all conjugacy classes (besides \( t \)) were covered.

Finally, in order to check that \( t \) is not a commutator, note that \( t \) acts on the set of conjugacy classes of \( G \) by left multiplication.
Indeed, if \( g^G \) is a conjugacy class, then \( t \cdot g^G = (tg)^G \) is also a conjugacy class. Moreover, \( t \) is a commutator if and only if
it stabilizes some conjugacy class: if \( t = g^{-1}x^{-1}gx \) for some \( x, g \in G \), then \( t \cdot g^G = g^G \) and vice versa.
This property can be easily tested in GAP by considering all conjugacy classes of \( G \); in the supplementary file we provide a function \texttt{CheckCentral}
which performs this check.

We note that the character table of \( G \) determines its set of commutators, see, for instance,~\cite[Problem~3.10]{char}.
It is possible to compute the full character table of \( G \) in GAP and verify the required properties, but this computation
takes much longer than the method proposed above.

\section{Acknowledgements}

The author would like to thank A.M.~Staroletov for fruitful discussions.
The research was carried out within the framework of the Sobolev Institute of Mathematics state contract.

\bigskip

\noindent
\emph{Saveliy V. Skresanov}

\noindent
\emph{Sobolev Institute of Mathematics, 4 Acad. Koptyug avenue,\\ 630090 Novosibirsk, Russia}

\noindent
\emph{Email address: skresan@math.nsc.ru}

\end{document}